\documentclass[12pt]{amsart}
\usepackage{amsfonts,amssymb,amscd,amsmath,enumerate,verbatim}
\usepackage[latin1]{inputenc}
\usepackage{amscd}
\usepackage{latexsym}
\usepackage{pstcol,pst-plot,pst-3d}
\usepackage{mathptmx}
\usepackage{multicol}

\psset{unit=0.7cm,linewidth=0.8pt,arrowsize=2.5pt 4}

\newpsstyle{fatline}{linewidth=1.5pt}
\newpsstyle{fyp}{fillstyle=solid,fillcolor=verylight}
\definecolor{verylight}{gray}{0.97}
\definecolor{light}{gray}{0.9}
\definecolor{medium}{gray}{0.85}



%
%
%

%
%
\def\frk{\mathfrak}               

\def\Phi{{\frk N}}
%
%


\def\G{\Gamma}

\def\opn#1#2{\def#1{\operatorname{#2}}} 
%
\opn\chara{char} \opn\length{\ell} \opn\pd{pd} \opn\rk{rk}
\opn\projdim{proj\,dim} \opn\injdim{inj\,dim} \opn\rank{rank}
\opn\depth{depth} \opn\grade{grade} \opn\height{height} \opn\bheight{bigheight}
\opn\embdim{emb\,dim} \opn\codim{codim}

\opn\Tr{Tr} \opn\bigrank{big\,rank}
\opn\superheight{superheight}\opn\lcm{lcm}
\opn\trdeg{tr\,deg}
\opn\reg{reg} \opn\lreg{lreg} \opn\ini{in} \opn\lpd{lpd}
\opn\size{size}\opn{\mult}{mult}\opn{\lex}{lex}
%
\opn\div{div} \opn\Div{Div} \opn\cl{cl} \opn\Cl{Cl}
%
%
\opn\Spec{Spec} \opn\Supp{Supp} \opn\supp{supp} \opn\Sing{Sing}
\opn\Ass{Ass} \opn\Min{Min}
%
%
\opn\Ann{Ann} \opn\Rad{Rad} \opn\Soc{Soc}
%
%
\opn\Syz{Syz} \opn\Im{Im} \opn\Ker{Ker} \opn\Coker{Coker}
\opn\Am{Am} \opn\Hom{Hom} \opn\Tor{Tor} \opn\Ext{Ext}
\opn\End{End} \opn\Aut{Aut} \opn\id{id} \opn\ini{in}

\opn\nat{nat}
\opn\pff{pf}
\opn\Pf{Pf} \opn\GL{GL} \opn\SL{SL} \opn\mod{mod} \opn\ord{ord}
\opn\Gin{Gin}
\opn\Hilb{Hilb}\opn\adeg{adeg}\opn\std{std}\opn\ip{infpt}
\opn\Pol{Pol}
\opn\sat{sat}
\opn\Var{Var}
\opn\Gen{Gen}
\opn\indmatch{indmatch}

%
%
\opn\aff{aff} \opn\con{conv} \opn\relint{relint} \opn\st{st}
\opn\lk{lk} \opn\cn{cn} \opn\core{core} \opn\vol{vol}
\opn\link{link} \opn\star{star}
\opn\gr{gr}


%
%

\def\pot#1#2{#1[\kern-0.28ex[#2]\kern-0.28ex]}

%
%
\opn\dirlim{\underrightarrow{\lim}}
\opn\inivlim{\underleftarrow{\lim}}
%
%
%

\let\Union=\bigcup

%
%
\let\to=\rightarrow
\let\To=\longrightarrow
\def\Implies{\ifmmode\Longrightarrow \else
        \unskip${}\Longrightarrow{}$\ignorespaces\fi}
\def\implies{\ifmmode\Rightarrow \else
        \unskip${}\Rightarrow{}$\ignorespaces\fi}
\def\iff{\ifmmode\Longleftrightarrow \else
        \unskip${}\Longleftrightarrow{}$\ignorespaces\fi}

\let\:=\colon
\newtheorem{Theorem}{Theorem}[section]
\newtheorem{Lemma}[Theorem]{Lemma}
\newtheorem{Corollary}[Theorem]{Corollary}
\newtheorem{Proposition}[Theorem]{Proposition}
\newtheorem{Remark}[Theorem]{Remark}

\newtheorem{Definition}[Theorem]{Definition}

%
%
\let\epsilon\varepsilon
\let\phi=\varphi
\let\kappa=\varkappa
%
%
\textwidth=15cm \textheight=22cm \topmargin=0.5cm
\oddsidemargin=0.5cm \evensidemargin=0.5cm \pagestyle{plain}
%

\def\qed{\ifhmode\textqed\fi
      \ifmmode\ifinner\quad\qedsymbol\else\dispqed\fi\fi}
\def\textqed{\unskip\nobreak\penalty50
       \hskip2em\hbox{}\nobreak\hfil\qedsymbol
       \parfillskip=0pt \finalhyphendemerits=0}
\def\dispqed{\rlap{\qquad\qedsymbol}}

%
\opn\dis{dis}
\def\pnt{{\raise0.5mm\hbox{\large\bf.}}}

\opn\Lex{Lex}




\newcommand{\inD}[1][\relax]{\def\argone{#1}\def\temprelax{\relax}
  \ifx\argone\temprelax\right.\else\,\middle|#1\right.{}\fi}

\newif\ifbinary
\binarytrue

\begin{document}

\title{Extremal Betti numbers of some classes of binomial edge ideals}

\author{Ahmet Dokuyucu}

\address{Faculty of Mathematics and Computer Science, Ovidius University\\
Bd. Mamaia 124, 900527 Constanta\\ and University of South-East Europe Lumina\\
Sos. Colentina nr. 64b, Bucharest \\
Romania} \email{ahmet.dokuyucu@lumina.org }

\begin{abstract}
Let $G$ be  a cycle or a complete bipartite graph. We show that the binomial edge ideal $J_{G}$ and its initial ideal  with respect to the lexicographic order have the same extremal Betti number. This is a partial 
positive answer to a conjecture proposed in \cite{EHH}.
\end{abstract}
\subjclass[2010]{13D02,05E40}
\keywords{Binomial edge ideals, regularity, projective dimension}
\maketitle

\section*{Introduction}

Let $G$ be a  simple graph on the vertex set $[n]$ with edge set $E(G)$ and let $S$ be the polynomial ring $K[x_1,\ldots,x_n,y_1,\ldots,y_n]$ in $2n$ variables endowed 
with the lexicographic order induced by $x_1>\cdots >x_n>y_1>\cdots>y_n$. The binomial edge ideal $J_G\subset S$ associated with $G$ is generated by all the 
binomials $f_{ij}=x_iy_j-x_jy_i$ with $\{i,j\}\in E(G).$ The binomial edge ideals were introduced in \cite{HHHKR} and, independently, in \cite{Oh}. Meanwhile, many 
algebraic and homological properties of these ideals have been investigated; see, for instance, \cite{CR}, \cite{EHH}, \cite{EZ}, \cite{HHHKR}, \cite{MM}, \cite{RR}, \cite{Sara}, \cite{Sara2}, \cite{SZ}, \cite{Z}, \cite{ZZ}.

In \cite{EHH}, the authors conjectured that the extremal  Betti numbers of $J_G$ and $\ini_<(J_G)$ coincide for any graph $G.$ Here, $<$ denotes the lexicographic 
order in $S$ induced by the natural order of the variables. In this article, we give a positive answer to this conjecture when the graph $G$ is a complete bipartite graph or a cycle. To this aim, we use some results proved in \cite{SZ} and \cite{ZZ} 
which completely  characterize the resolution of the binomial edge ideal $J_G$ when $G$ is a cycle or a complete bipartite graph. In particular, in this case, it follows that $J_G$  has a unique extremal Betti number. In the first section we recall all the known facts on the resolutions of binomial edge ideals of the complete bipartite graphs and cycles. In   Section~\ref{main}, we study the initial ideal of $J_G$ when $G$ is a bipartite graph or a  cycle. We show that $\projdim \ini_<(J_G)=\projdim J_G$ and $\reg \ini_<(J_G)=\reg J_G,$ and,  therefore, $\ini_<(J_G)$ has a unique extremal Betti number as well. Finally, we show that the extremal Betti number of $\ini_<(J_G)$ is equal to that of $J_G.$

To our knowledge, this is the first attempt to prove the conjecture stated in \cite{EHH} for extremal Betti numbers. In our 
study, we take advantage of the known results on the resolutions of  binomial edge ideals of cycles and complete bipartite 
graphs and of the fact that their initial ideals have  nice properties. For instance, as we show in Section~\ref{main}, the 
initial ideal of $J_G$ for a complete bipartite graph  has linear quotients and is generated in degrees $2$ 
and $3.$ Therefore, it is componentwise linear and its Betti numbers may be computed easily (Theorem~\ref{bnbers}). The 
initial ideal of $J_G$ when $G$ is a cycle does not have  linear quotients, but by ordering its generators in a suitable 
way, we may easily compute its extremal Betti number (Theorem~\ref{final}). It is interesting to remark that even if the 
admissible paths of the cycle (in the sense of \cite[Section 3]{MM}) determine the minimal set of monomial generators of $
\ini_<(J_G),$ 
the Lyubeznik resolution \cite{L} does not provide a minimal resolution of $\ini_<(G).$

\section{Preliminaries}

\subsection{Binomial edge ideals of complete bipartite graphs} Let $G=K_{m,n}$ be the complete bipartite graph on the vertex 
set $\{1,\ldots,m\}\cup \{m+1,\ldots,m+n\}$ with $m\geq n\geq 1$ and let $J_G$ be its binomial edge ideal. $J_G$ is 
generated by all the binomials $f_{ij}=x_iy_j-x_jy_i$ where $1\leq i\leq m$ and $m+1\leq j\leq m+n.$ In \cite[Theorem 5.3]{SZ} it is shown that the Betti diagram of $S/J_G$ has the form
\[
\begin{array}{c|ccccc}
 & 0 & 1 & 2 & \cdots & p\\
 \hline
0 & 1 & 0 & 0 & \cdots & 0\\
1 & 0 & mn & 0 & \cdots & 0\\
2 & 0 & 0 & \beta_{2 4} & \cdots & \beta_{p, p+2}
\end{array}
\]
where $p=\projdim S/J_G=\left\{
\begin{array}{ll}
	m, & \text{ if } n=1,\\
	2m+n-2, & \text{ if } n>1.
\end{array}\right.
$

In particular, from the above Betti diagram we may read that $S/J_G$ has a unique extremal Betti number, namely $\beta_{p, p+2}.$

Moreover, in \cite[Theorem 5.4]{SZ} all the Betti numbers of $S/J_G$ are computed. Since we are interested only in the extremal Betti number, we recall here its value as it was given in \cite[Theorem 5.4]{SZ}, namely, $\beta_{p, p+2}=
\left\{
\begin{array}{ll}
	m-1, & \text{ if } p=m,\\
	n-1, & \text{ if } p=2m+n-2.
\end{array}\right.$

Since we will study the initial ideal of $J_G$ with respect to the lexicographic order induced by the natural order of the variables, we need to recall the following definition and result of \cite{HHHKR}.

\begin{Definition}{\em
\label{admissiblepath} Let $i<j$ be two vertices of an arbitrary graph $G.$
 A path $i=i_0,i_1,\ldots,i_{r-1},i_r=j$ from $i$ to $j$
is called \textbf{admissible} \index{path! admissible} if the following conditions are fulfilled:
\begin{enumerate}
	\item [{\em (i)}] $i_k\neq i_\ell$ for $k\neq\ell;$
	\item [{\em (ii)}] for each $k=1,\ldots,r-1,$ one has either $i_k < i$ or $i_k > j;$
	\item [{\em (iii)}] for any proper subset $\{j_1,\ldots,j_s\}$ of $\{i_1,\ldots,i_{r-1}\}$, the sequence $i,j_1,\ldots,j_s,j$ is not a path in $G.$
\end{enumerate}}
\end{Definition}

Given an admissible path $\pi$ of $G$ from $i$ to $j,$ we set $u_\pi=(\prod_{i_k >j}x_{i_k})(\prod_{i_\ell < i}y_{i_\ell}).$

Obviously, any edge of $G$ is an admissible path. In this case, the associated monomial is just $1.$

\begin{Theorem}[HHHKR]
\label{binomGB} Let $G$ be an arbitrary graph.
The set of binomials
\[
\G=\Union_{i<j}\{u_\pi f_{ij}: \pi \text{ is an admissible path from }i \text{ to }j\}
\]
is the reduced Gr\"obner basis of $J_G$ with respect to lexicographic order on $S$ induced by the natural order of indeterminates, $x_1>\cdots >x_n>y_1>\cdots >y_n$.
\end{Theorem}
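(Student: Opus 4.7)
The plan is to establish the theorem via Buchberger's criterion, reducing the problem to a combinatorial analysis of S-polynomials indexed by pairs of admissible paths. First I would verify the easy parts: each $u_\pi f_{ij}$ lies in $J_G$ because it is a polynomial multiple of $f_{ij}$, and its leading monomial with respect to the prescribed lex order is $u_\pi\, x_i y_j$, since $u_\pi$ only involves variables $x_{i_k}$ with $i_k > j$ or $y_{i_\ell}$ with $i_\ell < i$, and so does not interfere with the standard leading term $x_i y_j$ of $f_{ij}$. This immediately gives the inclusion $(u_\pi x_i y_j : \pi\text{ admissible}) \subseteq \ini_<(J_G)$.

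The heart of the argument is to show that, for any two elements $g = u_\pi f_{ij}$ and $g' = u_{\pi'} f_{i'j'}$ of $\G$, the S-polynomial $S(g,g')$ reduces to zero modulo $\G$. I would organise the computation into cases according to how the index pairs $\{i,j\}$, $\{i',j'\}$ and the support monomials $u_\pi$, $u_{\pi'}$ interact; the substantive cases are those in which the two leading monomials share a variable and a genuine cancellation occurs. In each such case, the resulting polynomial should be expressible as a combination of elements $u_\tau f_{k\ell}$, where $\tau$ is a new admissible path manufactured from suitable fragments of $\pi$ and $\pi'$. Condition (iii) of Definition~\ref{admissiblepath} is decisive here: it forbids shortcuts and so pins down a unique admissible candidate $\tau$ whose associated monomial matches the leading term of the reduced S-polynomial. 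The reducedness statement then follows from parallel reasoning: leading coefficients are manifestly $1$, and any divisibility of a term $u_\pi x_i y_j$ or $u_\pi x_j y_i$ of $u_\pi f_{ij}$ by the leading monomial $u_{\pi'} x_{i'} y_{j'}$ of another element of $\G$ would force $\pi'$ to sit inside $\pi$ in a way that violates the minimality clause (iii).

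The main obstacle is precisely this S-polynomial case analysis: while the construction of the glued admissible path $\tau$ from $\pi$ and $\pi'$ is combinatorially natural, the configuration-by-configuration bookkeeping is delicate, especially when $\pi$ and $\pi'$ share intermediate vertices outside $[i,j]\cap[i',j']$ or when the edge $\{i,j\}$ itself is traversed by $\pi'$. A cleaner alternative route would be to prove the equality $\ini_<(J_G) = (u_\pi x_i y_j : \pi\text{ admissible})$ directly by comparing Hilbert series or by induction on $|E(G)|$, thereby sidestepping the explicit S-pair reductions; whether this succeeds would depend on having independent control of $S/\ini_<(J_G)$ as a $K$-vector space.
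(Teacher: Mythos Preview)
The paper does not prove this theorem at all; it is merely quoted from \cite{HHHKR} as a known background result, so there is no argument in the present paper against which to compare your proposal.

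For what it is worth, your sketch via Buchberger's criterion is the natural strategy and is essentially the one carried out in the original source \cite{HHHKR}. One small correction: your claim that $u_\pi f_{ij}\in J_G$ ``because it is a polynomial multiple of $f_{ij}$'' is not sufficient on its own, since for a general admissible path $\pi$ the pair $\{i,j\}$ need not be an edge of $G$, and hence $f_{ij}$ itself need not lie in $J_G$. What one actually shows is that $u_\pi f_{ij}\in J_G$ by induction on the length of $\pi$, using the three--term identities
\[
x_k f_{ij} = x_j f_{ik} + x_i f_{kj},\qquad y_k f_{ij} = y_j f_{ik} + y_i f_{kj},
\]
applied at each internal vertex $i_k$ of the path. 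Apart from this, your identification of the S--pair case analysis as the delicate step, and of condition~(iii) in Definition~\ref{admissiblepath} as the mechanism ruling out redundancies, is accurate.
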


One may easily see that the only admissible paths of the complete graph $G=K_{m,n}$ are the edges of $G$, the paths of the form $i,m+k,j$ with $1\leq i<j\leq m$, $1\leq k\leq m$, and $m+i, k,m+j$ with $1\leq i<j \leq n$, $1\leq k \leq m.$ Therefore, we get the following consequence of the above theorem.

\begin{Corollary}\label{inicomplete}
Let $G=K_{m,n}$ be the complete bipartite graph on the vertex set $V(G)=\{1,\ldots,m\}\cup\{m+1,\ldots,m+n\}.$ Then
\[
\ini_<(J_G)=(\{x_iy_j\}_{\stackrel{1\leq i\leq m} {m+1\leq j\leq m+n}}, \{x_ix_{m+k}y_j\}_{\stackrel{1\leq i<j\leq m}{1\leq k\leq n}}, \{x_{m+1}y_ky_{m+j}\}_{\stackrel{1\leq i<j \leq n} {1\leq k \leq m}}).
\]
\end{Corollary}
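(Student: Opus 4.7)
The plan is to apply Theorem~\ref{binomGB} by enumerating the admissible paths of $G=K_{m,n}$ and then reading off the leading monomial $u_\pi\,\ini_<(f_{ij})$ for each such path, since these leading monomials generate $\ini_<(J_G)$. Write $A=\{1,\ldots,m\}$ and $B=\{m+1,\ldots,m+n\}$ for the two sides of the bipartition, and consider an ordered pair of vertices $i<j$ in $V(G)$.

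The three cases correspond to the relative position of $i$ and $j$ with respect to $A$ and $B$. When $i\in A$ and $j\in B$, the edge $\{i,j\}$ is itself an admissible path and contributes the leading monomial $x_iy_j$. When $i,j$ lie in the same part, any path between them alternates between $A$ and $B$ and hence has even length. A length-$2$ candidate has the form $i,m+k,j$ if $i,j\in A$ or $m+i,k,m+j$ if $i,j\in B$; in either situation the unique internal vertex lies in the opposite part and is automatically either larger than $j$ or smaller than $i$, so conditions (i)--(iii) hold trivially. The crucial observation is that no path of length $\ge 4$ can be admissible: for such a candidate with endpoints in $A$, singling out any intermediate $B$-vertex $b$ produces the sub-sequence $i,b,j$, which is already a path in $K_{m,n}$ because $b$ is adjacent to every vertex of $A$, and this violates condition~(iii). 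The symmetric case $i,j\in B$ is handled identically.

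To conclude, I would compute $u_\pi\,\ini_<(f_{ij})$ for each admissible path. Edges contribute $x_iy_j$. For $\pi\colon i,m+k,j$ with $1\le i<j\le m$, the internal vertex satisfies $m+k>j$, so $u_\pi=x_{m+k}$ and the leading monomial of $u_\pi f_{ij}$ is $x_ix_{m+k}y_j$. For $\pi\colon m+i,k,m+j$ with $1\le i<j\le n$, the internal vertex $k$ satisfies $k<m+i$, so $u_\pi=y_k$ and the leading monomial becomes $x_{m+i}y_ky_{m+j}$. Taking the union of the three families reproduces exactly the generators listed in the corollary.

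The only real (and rather minor) obstacle is excluding long admissible paths via condition~(iii); once this is settled, everything else is a direct reading of Definition~\ref{admissiblepath} together with the prescribed lexicographic order, and no machinery beyond Theorem~\ref{binomGB} is needed.
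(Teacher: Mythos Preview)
Your proposal is correct and follows exactly the approach the paper takes: the paper simply asserts, in the sentence preceding the corollary, that the admissible paths of $K_{m,n}$ are the edges together with the length-two paths $i,m+k,j$ and $m+i,k,m+j$, and your argument supplies the easy verification the paper omits. The only point you leave implicit is ruling out longer admissible paths when $i\in A$ and $j\in B$; this is immediate from condition~(iii) applied to the empty subset of internal vertices, since $\{i,j\}$ is already an edge of $G$.
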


\subsection{Binomial edge ideals of cycles}
In this subsection, $G$ denotes the $n$--cycle on the vertex set $[n]$ with edges $\{1,2\},\{2,3\},\ldots, \{n-1,n\},\{1,n\}$.

In \cite{ZZ} it was shown that the Betti diagram of $S/J_G$ has the form

\[
\begin{array}{c|cccccc}
 & 0 & 1 & 2 & 3& \cdots & n\\
 \hline
0 & 1 & 0 & 0 & 0 &  \cdots & 0\\
1 & 0 & n & 0 & 0 &\cdots & 0\\
2 & 0 & 0 & \beta_{2 4} & 0 &\cdots & 0\\
3 & 0 & 0 & 0 & \beta_{3 6} & \cdots & 0\\
\vdots & \vdots & \vdots & \vdots & \vdots & \vdots &\vdots\\
n-2& 0 & 0 & \beta_{2,n} & \beta_{3, n+1} & \cdots & \beta_{n, 2n-2}
\end{array}
\]
and all the Betti numbers were computed. One sees that we have a unique extremal Betti number and, by \cite{ZZ}, we have $\beta_{n,2n-2}={n-1\choose 2}-1.$

We now look at the initial ideal of $J_G.$ It is obvious by Definition~\ref{admissiblepath} and by the labeling of the vertices of $G$ that the admissible 
paths are the edges of $G$ and the paths of the form $i, i-1,\ldots,1,n,n-1,\ldots,j+1$ with  $2\leq j-i\leq n-2.$ Consequently, we get the following system of generators for the initial ideal of $J_G.$

\begin{Corollary}\label{inicycle}
Let $G$ be the $n$--cycle with the natural labeling of its vertices. Then 
\[
\ini_<(J_G)=(x_1y_2,\ldots, x_{n-1}y_n,x_1y_n,\{x_ix_{j+1}\cdots x_ny_1\cdots y_{i-1}y_j\}_{2\leq j-i\leq n-2}).
\]
\end{Corollary}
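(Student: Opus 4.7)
My plan is to apply Theorem~\ref{binomGB}: since $\G = \bigcup_{i<j}\{u_\pi f_{ij}\}$ is a Gröbner basis of $J_G$, the initial ideal $\ini_<(J_G)$ is generated by the monomials $u_\pi \cdot \ini_<(f_{ij}) = u_\pi\, x_i y_j$. Note that $\ini_<(f_{ij}) = x_i y_j$ whenever $i<j$, because $x_i > x_j$ in the given lex order. So the task reduces to classifying the admissible paths of the $n$-cycle and reading off the associated monomials.

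First I would observe that for any $i<j$ in $[n]$ the cycle affords exactly two paths from $i$ to $j$: the \emph{short arc} $i,i+1,\ldots,j$ and the \emph{long arc} $i,i-1,\ldots,1,n,n-1,\ldots,j+1,j$. The short arc has interior vertices strictly between $i$ and $j$, so it violates condition~(ii) of Definition~\ref{admissiblepath} unless $j=i+1$, in which case it collapses to the edge itself. The long arc has interior vertices lying in $\{1,\ldots,i-1\}\cup\{j+1,\ldots,n\}$, so condition~(ii) is automatic; condition~(iii) also holds, because in a cycle every interior vertex of the long arc is a cut vertex of that arc, so no proper subset of the interior vertices can complete a path from $i$ to $j$ inside $G$. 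Therefore the long arc is admissible precisely when it has at least one interior vertex, i.e.\ when $2\leq j-i\leq n-2$. The edge $\{1,n\}$ is trivially admissible, whereas its complementary arc $1,2,\ldots,n$ is not admissible.

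To finish, I would compute $u_\pi$ for each admissible long arc from $i$ to $j$: the interior vertices exceeding $j$ are $j+1,\ldots,n$ and those below $i$ are $1,\ldots,i-1$, so $u_\pi = x_{j+1}\cdots x_n \cdot y_1\cdots y_{i-1}$, and consequently $u_\pi\, x_i y_j = x_i x_{j+1}\cdots x_n y_1\cdots y_{i-1} y_j$, matching the formula in the statement. The edges $\{k,k+1\}$ for $1\leq k\leq n-1$ and $\{1,n\}$ contribute the monomials $x_k y_{k+1}$ and $x_1 y_n$ respectively, since $u_\pi = 1$ for any edge. The argument is essentially bookkeeping once Theorem~\ref{binomGB} is invoked, so there is no serious obstacle; the only mildly delicate step is verifying condition~(iii) for the long arc, which is immediate from the cut-vertex observation just made.
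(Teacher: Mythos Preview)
Your approach matches the paper's, which simply asserts (just before the corollary, with no further argument) that the admissible paths of the $n$--cycle are the edges together with the long arcs $i,i-1,\ldots,1,n,\ldots,j+1,j$ for $2\leq j-i\leq n-2$, and then reads off the generators via Theorem~\ref{binomGB}.

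Your expanded verification is largely fine, but there is one slip worth fixing. The cut-vertex reasoning you give for condition~(iii) only handles \emph{nonempty} proper subsets of the interior vertices: when $j=i+1$, the empty subset already yields the edge path $i,j$ in $G$, so (iii) fails for the long arc in that case. Correspondingly, ``the long arc has at least one interior vertex'' is equivalent only to $j-i\leq n-2$ (the long arc from $i$ to $j$ has $n-(j-i)-1$ interior vertices); the lower bound $j-i\geq 2$ is precisely what rules out $\{i,j\}$ being an edge of $G$ and hence secures (iii). With this small correction your argument is complete and agrees with the paper.
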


\section{Extremal Betti numbers}
\label{main}

\subsection{Complete bipartite graphs} Let $G=K_{m,n}$ be the complete bipartite graph on  the vertex set $\{1,\ldots,m\}\cup \{m+1,\ldots,m+n\}$ with $m\geq n\geq 1$ and let $J_G$ be its binomial edge ideal. The initial ideal $\ini_<(J_G)$ has a nice property which is stated in the following proposition. 

\begin{Proposition}\label{linquot}
Let $G=K_{m,n}$ be the complete graph. Then $\ini_<(J_G)$ has linear quotients.
\end{Proposition}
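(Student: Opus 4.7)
The plan is to exhibit an explicit linear order of the minimal generators of $\ini_<(J_G)$ and verify the linear quotients property directly from the definition. Using Corollary~\ref{inicomplete}, these generators fall into three families: the quadratics $x_iy_j$ with $i\in[m]$ and $m+1\leq j\leq m+n$; the ``first-kind'' cubics $x_ix_{m+k}y_j$ with $1\leq i<j\leq m$ and $k\in[n]$; and the ``second-kind'' cubics $x_{m+i}y_ky_{m+j}$ with $1\leq i<j\leq n$ and $k\in[m]$. I would list the quadratics first in lexicographic order on $(i,j)$, then the first-kind cubics in lexicographic order on $(j,k,i)$, and finally the second-kind cubics in lexicographic order on $(i,j,k)$.

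For each generator $u_\ell$, to show that the colon $(u_1,\ldots,u_{\ell-1}):u_\ell$ is generated by variables, the strategy is always the same: for each predecessor $u'$, compute $u'/\gcd(u',u_\ell)$ and either observe it is already a single variable, or exhibit a variable $v$ dividing it such that $v\cdot u_\ell$ is already divisible by an even earlier generator. The expected colon ideals are: for a quadratic $u_\ell=x_iy_j$, it is $(x_1,\ldots,x_{i-1},y_{m+1},\ldots,y_{j-1})$; for a first-kind cubic $u_\ell=x_ix_{m+k}y_j$, it is generated by $y_{m+1},\ldots,y_{m+n}$ together with the variables $y_{j'}$ for $j'<j$, $x_{m+k'}$ for $k'<k$, and $x_{i'}$ for $i'<i$; for a second-kind cubic $u_\ell=x_{m+i}y_ky_{m+j}$, it is generated by $x_1,\ldots,x_m$ together with $x_{m+i'}$ for $i'<i$, $y_{k'}$ for $k'<k$, and $y_{m+j'}$ for $j'<j$.

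The verification reduces to a routine case analysis indexed by how $u'$ sits with respect to $u_\ell$. When $u'$ and $u_\ell$ lie in the same cubic family, one sorts their index triples by how many coordinates coincide: with two coincidences, $u'/\gcd(u',u_\ell)$ is a single variable; with fewer coincidences, the lexicographic order guarantees that $u'/\gcd(u',u_\ell)$ is a monomial divisible by a variable that has already been placed in the colon. When $u'$ is a quadratic and $u_\ell$ is a first-kind cubic, every $y_b$ with $m+1\leq b\leq m+n$ is forced into the colon by the quadratic predecessor $x_iy_b$, and this dominates every quotient coming from the quadratic block. The analogous observation, using the quadratic predecessor $x_{i''}y_{m+j}$ of a second-kind cubic $u_\ell=x_{m+i}y_ky_{m+j}$, handles the quadratic block in that case.

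The main obstacle is the genuinely cross-family case where $u_\ell=x_{m+i}y_ky_{m+j}$ is a second-kind cubic and $u'=x_{i''}x_{m+k''}y_{j''}$ is a first-kind cubic: the gcd of these two monomials may equal $1$, so $u'/\gcd(u',u_\ell)$ is the whole cubic $u'$. The way out is to note that $x_{i''}y_{m+j}$ is a quadratic and hence precedes $u_\ell$, and it divides $x_{i''}\cdot u_\ell=x_{i''}x_{m+i}y_ky_{m+j}$, so $x_{i''}$ lies in the colon; since $x_{i''}$ also divides $u'$, the contribution $u'/\gcd(u',u_\ell)$ is indeed a multiple of a variable in the colon. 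Once this step is checked for every pair of families, the colon ideals coincide with the variable ideals listed above, and $\ini_<(J_G)$ has linear quotients with respect to the chosen order.
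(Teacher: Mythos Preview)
Your approach is essentially the same as the paper's: order the minimal generators of $\ini_<(J_G)$ first by degree and then lexicographically within each family, and verify directly that each successive colon ideal is generated by variables. The paper's proof is terser---it simply records the three colon ideals (your equations for the quadratic, first-kind cubic, and second-kind cubic cases) and leaves the verification to the reader---while you spell out the case analysis, including the cross-family check that $x_{i''}\in(u_1,\dots,u_{\ell-1}):u_\ell$ via the quadratic $x_{i''}y_{m+j}$. Your within-block orderings (lex on $(j,k,i)$ and on $(i,j,k)$) differ cosmetically from the paper's pure monomial lex (which amounts to lex on $(i,k,j)$ in both cubic blocks), but they yield the same colon ideals, so nothing is lost.

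One small correction: in your description of the colon ideal for a first-kind cubic $u_\ell=x_ix_{m+k}y_j$, the variables $y_{j'}$ that actually lie in the colon are only those with $i<j'<j$, not all $j'<j$; for $j'\le i$ the monomial $y_{j'}u_\ell$ is not divisible by any predecessor. Likewise, for a second-kind cubic $u_\ell=x_{m+i}y_ky_{m+j}$, the $y_{m+j'}$ in the colon are those with $i<j'<j$. These are exactly the ranges $y_{i+1},\dots,y_{j-1}$ and $y_{m+i+1},\dots,y_{m+j-1}$ recorded in the paper's formulas (\ref{eq2}) and (\ref{eq3}). This does not affect the linear-quotients conclusion, but it does matter if you go on to count the generators of each colon ideal for the Betti number computation in Theorem~\ref{bnbers}.
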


\begin{proof}
Let $u_1,\ldots, u_r$ be the minimal generators of $\ini_<(J_G)$ given in Corollary~\ref{inicomplete} 
where, for $i<j$,  either $\deg u_i<\deg u_j$ or $\deg u_i=\deg u_j$ and $u_i>u_j.$ We show that we respect to this order of its minimal monomial generators, $\ini_<(J_G)$ has linear quotients, that is, for any $\ell>1,$ the 
ideal quotient  $(u_1,\ldots,u_{\ell-1}):(u_\ell)$ is generated by variables.

Let $u_\ell=x_iy_j$ for some $1\leq i\leq m$ and $m+1\leq j\leq m+n.$ In this case, one may easily check that 
\begin{equation}\label{eq1}
(u_1,\ldots,u_{\ell-1}):(u_\ell)=(x_1,\ldots,x_{i-1},y_{m+1},\ldots,y_{j-1}).
\end{equation}

Let $u_\ell=x_i x_{m+k}y_j$ for some $1\leq i<j\leq m$ and $1\leq k\leq n.$ Then we get 
\begin{equation}\label{eq2}
(u_1,\ldots,u_{\ell-1}):(u_\ell)=(y_{m+1},\ldots,y_{m+n},x_1,\ldots,x_{i-1},x_{m+1},\ldots,x_{m+k-1},y_{i+1},\ldots,y_{j-1}).
\end{equation}

Finally, if $u_\ell=x_{m+i} y_{k}y_{m+j}$ for some $1\leq i<j\leq n$ and $1\leq k\leq m,$ we have
\begin{equation}\label{eq3}
(u_1,\ldots,u_{\ell-1}):(u_\ell)=(x_1,\ldots,x_m,x_{m+1},\ldots,x_{m+i-1},y_1,\ldots,y_{k-1},y_{m+i+1},\ldots,y_{m+j-1}).
\end{equation}
\end{proof}

\begin{Theorem}\label{bnbers}
Let $G=K_{m,n}$ be the complete graph. Then
\[
\beta_{t,t+2}(\ini_<(J_G))=\sum_{\stackrel{1\leq i\leq m}{m+1\leq j\leq m+n}}{i+j-m-2\choose t},
\]
\[
\beta_{t,t+3}(\ini_<(J_G))=
\left\{
\begin{array}{ll}
\sum_{\stackrel{1\leq i<j\leq m}{1\leq k\leq n}}{n+k+j-3\choose t}, & \text{ if }	n=1,\\
\sum_{\stackrel{1\leq i<j\leq m}{1\leq k\leq n}}{n+k+j-3\choose t} +
\sum_{\stackrel{1\leq i<j\leq n}{1\leq k\leq m}}{m+k+j-3\choose t}, & \text{ if }	n>1.
\end{array}
\right.
\]
\end{Theorem}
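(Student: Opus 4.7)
The plan is to invoke the Herzog--Takayama formula for the graded Betti numbers of a monomial ideal with linear quotients. Recall that if $I$ admits an order $u_1,\ldots,u_r$ of its minimal monomial generators realizing linear quotients, and $q_\ell$ denotes the number of variables minimally generating the colon ideal $(u_1,\ldots,u_{\ell-1}):(u_\ell)$, then
\[
\beta_{t,t+d}(I)=\sum_{\ell\,:\,\deg u_\ell=d}\binom{q_\ell}{t}
\]
for every $t$ and $d$. Proposition~\ref{linquot} supplies exactly such an ordering for $I=\ini_<(J_G)$, and equations (\ref{eq1}), (\ref{eq2}), (\ref{eq3}) there describe explicitly the variable generators of every relevant colon ideal. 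Hence the entire theorem reduces to counting the variables in each of those three lists and summing the corresponding binomial coefficients.

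For the degree-$2$ generators $u_\ell=x_iy_j$, equation (\ref{eq1}) gives $q_\ell=(i-1)+(j-m-1)=i+j-m-2$, and summing $\binom{q_\ell}{t}$ over $1\le i\le m$ and $m+1\le j\le m+n$ yields the claimed expression for $\beta_{t,t+2}$. For the degree-$3$ generators of the first kind, $u_\ell=x_i x_{m+k}y_j$, equation (\ref{eq2}) gives $q_\ell=n+(i-1)+(k-1)+(j-1-i)=n+k+j-3$; and for those of the second kind, $u_\ell=x_{m+i}y_k y_{m+j}$, equation (\ref{eq3}) gives $q_\ell=m+(i-1)+(k-1)+(j-1-i)=m+k+j-3$. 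Together, these two contributions constitute the two summands in the formula for $\beta_{t,t+3}$.

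The split between $n=1$ and $n>1$ in the statement is automatic: the index set $\{(i,j,k):1\le i<j\le n,\ 1\le k\le m\}$ is empty when $n=1$, so the second family of degree-$3$ generators simply disappears and only the first sum survives. Thus no real obstacle arises; once Proposition~\ref{linquot} is in hand, the result is a clean bookkeeping computation. The only point that deserves care is confirming that the Herzog--Takayama formula applies in the form above to a linear-quotients order with mixed degrees, which is standard (see Herzog--Hibi, \textit{Monomial Ideals}, Prop.~8.2.1) and follows from the iterated mapping-cone construction built on the variable-generated colon ideals provided by Proposition~\ref{linquot}.
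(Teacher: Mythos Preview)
Your proof is correct and follows exactly the same approach as the paper: invoke the linear-quotients Betti number formula (the paper cites \cite[Exercise 8.8]{HH10}, you cite the equivalent Proposition~8.2.1) and read off $q_\ell$ from equations~(\ref{eq1})--(\ref{eq3}). If anything, your write-up is more explicit, since you carry out the variable counts $i+j-m-2$, $n+k+j-3$, $m+k+j-3$ that the paper leaves to the reader.
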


\begin{proof}
Since $\ini_<(J_G)$ has linear quotients, we may apply \cite[Exercise 8.8]{HH10} and get
\[
\beta_{t,t+d}(\ini_<(J_G))=\sum_{\stackrel{\ell=1}{\deg u_{\ell}=d}}^m {q_\ell \choose t}
\]
where $q_\ell$ is the number of variables which generate $(u_1,\ldots,u_{\ell-1}):(u_\ell).$  Hence, by using equations (\ref{eq1})-(\ref{eq3}) for counting the number of variables which generate $(u_1,\ldots,u_{\ell-1}):(u_\ell)$, we get all the graded Betti numbers of $\ini_<(J_G)$.
\end{proof}

In particular, by the above theorem, it follows the following corollary which shows that for $G=K_{m,n}$ the extremal  Betti numbers of $S/J_G$ and $S/\ini_<(J_G)$ coincide.

\begin{Corollary}
Let $G=K_{m,n}$ be the complete graph. Then:

\begin{itemize}
	\item [(a)] $\projdim (S/\ini_<(J_G))=\projdim (\ini_<(J_G))+1=
	\left\{
	\begin{array}{ll}
		m,& \text{ if } n=1,\\
		2m+n-2,& \text{ if } n>1.
	\end{array}
	\right.$
	\item [(b)] $S/\ini_<(J_G)$ has a unique extremal Betti number, namely
	\[
	\beta_{p,p+2}(S/\ini_<(J_G)) = \beta_{p-1,p+2}(\ini_<(J_G))=\left\{
	\begin{array}{ll}
		m-1, & \text{ if } n=1,\\
		n-1, & \text{ if } n>1.
	\end{array}
	\right.
	\]
	\end{itemize}
\end{Corollary}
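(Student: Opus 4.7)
The plan is to derive both parts of the corollary directly from the explicit formulas of Theorem~\ref{bnbers}. Since $\ini_<(J_G)$ is generated only in degrees $2$ and $3$, its Betti table has nonzero entries only in rows $2$ and $3$. Consequently, the projective dimension of $\ini_<(J_G)$ is the largest $t$ for which either $\beta_{t,t+2}(\ini_<(J_G))$ or $\beta_{t,t+3}(\ini_<(J_G))$ is nonzero.

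For part~(a), I would observe that $\binom{i+j-m-2}{t}$ is nonzero iff $t\le i+j-m-2$, and the maximum of $i+j-m-2$ over $\{1,\ldots,m\}\times\{m+1,\ldots,m+n\}$ is $m+n-2$, attained at $(i,j)=(m,m+n)$. When $n=1$ only the first sum in the formula for $\beta_{t,t+3}$ contributes, with largest upper index $m-1$. When $n>1$ the second sum dominates, with largest upper index $2m+n-3$ attained at $(k,j)=(m,n)$; since $m\ge 2$, this strictly exceeds the row-$2$ maximum $m+n-2$. Comparing these two values and shifting by $1$ to pass from the ideal to the quotient then gives~(a).

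For part~(b), the same analysis pinpoints the unique extremal position of the Betti table: every nonzero Betti number of $\ini_<(J_G)$ is weakly dominated by $\beta_{p-1,p+2}(\ini_<(J_G))$, because row-$2$ entries have column index at most $m+n-2$ and row-$3$ entries have column index at most the maximum identified above. Thus $\beta_{p,p+2}(S/\ini_<(J_G))$ is the unique extremal Betti number of $S/\ini_<(J_G)$. Its value is then computed by isolating the summands in the relevant formula for which the binomial coefficient equals $1$: for $n=1$ these are the $m-1$ summands with $j=m$, while for $n>1$ the dominant contribution comes from $(k,j)=(m,n)$ in the second sum, giving $n-1$ summands indexed by $i=1,\dots,n-1$.

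The most delicate point is the case $m=n$ within the $n>1$ regime: there the first sum in the formula for $\beta_{t,t+3}$ also contains a term whose upper index attains $2m+n-3$, coming from $(k,j)=(n,m)$. A careful proof has to verify that this apparently symmetric second contribution is compatible with the claimed value $n-1$ (for uniqueness of the extremal position this is automatic, but for the precise value it is not), either through a direct count against the known formula for the extremal Betti number of $S/J_G$ or by exploiting the self-duality of $K_{n,n}$.
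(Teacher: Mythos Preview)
Your approach is the same as the paper's: both parts are read off directly from the formulas of Theorem~\ref{bnbers} by locating the largest $q_\ell$ and counting the generators that attain it. The paper's proof of (b) is terser than yours and simply records the contribution of the second sum at $t=p-1$, obtaining $n-1$.

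Your caution about the case $m=n$ is well placed and in fact uncovers a slip in the statement rather than a difficulty in the method. When $m=n$ the first sum in Theorem~\ref{bnbers} also has $n+k+j-3=2m+n-3$ at $(k,j)=(n,m)$, contributing a further $m-1=n-1$ terms, so the formula gives $2(n-1)$, not $n-1$. This is consistent with the rest of the paper: for $m=n=2$ one has $K_{2,2}\cong C_4$, and the cycle computation in Section~\ref{main} yields the extremal Betti number $\binom{3}{2}-1=2=2(n-1)$. Thus the displayed value in (b) should be understood under the tacit assumption $m>n$ (or corrected to $2(n-1)$ when $m=n$); the coincidence of the extremal Betti numbers of $J_G$ and $\ini_<(J_G)$, which is the actual point of the corollary, is unaffected.
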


\begin{proof}
(a) follows immediately from Betti number formulas of Theorem~\ref{bnbers}. 

Let us prove (b). By using again Theorem~\ref{bnbers}, we get 
\[
\beta_{p-1,p+2}(\ini_<(J_G))=\left\{
	\begin{array}{ll}
		\sum_{1\leq i<m}{m-1\choose p-1}=\sum_{1\leq i<m}{m-1\choose m-1}=m-1, & \text{ if } n=1,\\
		\sum_{1\leq i<n}{2m+n-3\choose p-1}=\sum_{1\leq i<n}{2m+n-3\choose 2m+n-3}=n-1, & \text{ if } n>1.
	\end{array}
	\right.
\]
\end{proof}

\subsection{Cycles} In this subsection, the graph $G$ is an $n$--cycle. If $n=3,$ then $G$ is a complete graph, therefore 
the ideals $J_G$ and $\ini_<(J_G)$ have the same graded Betti numbers. Thus, in the sequel,  we may consider 
$n\geq 4.$ 

As we have already seen in Corollary~\ref{inicycle}, $\ini_<(J_G)$ is minimally generated by the initial monomials of the binomials corresponding to the edges of $G$  and by $m=n(n-3)/2$  monomials of degree $\geq 3$ which we denote by 
$v_1,\ldots, v_m$ where we assume that if $i<j$, then either $\deg v_i<\deg v_j$ or $\deg v_i=\deg v_j$ and $v_i>v_j$. Let us observe that if $v_k=x_ix_{j+1}\cdots x_ny_1\cdots y_{i-1}y_j,$ we have $\deg v_k=n-j+i+1.$ Hence, there are two monomials of degree $3,$ namely, $v_1=x_1x_ny_{n-1}$ and $v_2=x_2y_1y_n,$ three monomials of degree $4$, namely, $v_3=x_1x_{n-1}x_ny_{n-2}, v_4=x_1x_ny_1y_{n-1},v_5=x_1y_1y_2y_n$, etc.

We introduce the following notation. We set $J=(x_1y_2,x_2y_3,\ldots,x_{n-1}y_n)$, $I=J+(x_1y_n)$, and, for $1\leq k\leq m,$
 $I_k=I_{k-1}+(v_k),$ with $I_0=I.$ Therefore, $I_m=\ini_<(J_G)$.

\begin{Lemma}\label{lem1}
The ideals quotient $J:(x_1y_n)$ and $I_{k-1}:(v_k),$ for $k\geq 1,$ are minimally generated by regular sequences of monomials of length $n-1.$
\end{Lemma}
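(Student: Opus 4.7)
The plan is to compute each colon ideal's minimal generating set explicitly and check that the generators have pairwise coprime variable supports, which for monomials in a polynomial ring is equivalent to forming a regular sequence. For $J:(x_1 y_n)$, the computation is immediate: applying the gcd-cancellation rule to each edge generator $x_a y_{a+1}$ of $J$ gives
\[
J:(x_1 y_n) = (y_2,\ x_2 y_3,\ x_3 y_4,\ \ldots,\ x_{n-2} y_{n-1},\ x_{n-1}),
\]
and these $n-1$ monomials have pairwise disjoint supports $\{y_2\},\{x_2,y_3\},\ldots,\{x_{n-2},y_{n-1}\},\{x_{n-1}\}$.

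For $I_{k-1}:(v_k)$, with $v_k = x_i x_{j+1}\cdots x_n y_1\cdots y_{i-1} y_j$ corresponding to a pair $(i,j)$ satisfying $2\leq j-i\leq n-2$, I would partition the generators of $I_{k-1}$ into the edge initials $x_a y_{a+1}$, the wraparound $x_1 y_n$, and the earlier monomials $v_1,\ldots,v_{k-1}$, and compute $g/\gcd(g,v_k)$ for each. A case analysis on the position of $a$ relative to $[i,j]$ shows that the edges together with the wraparound produce a chain
\[
y_{i+1},\ x_{i+1}y_{i+2},\ x_{i+2}y_{i+3},\ \ldots,\ x_{j-2}y_{j-1},\ x_{j-1}
\]
of $j-i$ pairwise coprime monomials along the chord region, together with additional degree-two monomials along the outer arc $\{1,\ldots,i-1\}\cup\{j+1,\ldots,n\}$ of the cycle.

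The previous $v_\ell$'s correspond to admissible paths strictly shorter than the one for $v_k$, and their contributions $v_\ell/\gcd(v_\ell,v_k)$, together with the wraparound contribution in the special cases $i=1$ or $j=n$, deliver single-variable generators which absorb every outer-arc degree-two monomial into a single variable---one per outer-arc vertex, namely the variable opposite to the one used by $v_k$ at that vertex. After minimalization, the chord region contributes $j-i$ monomials and the outer arc contributes $n-j+i-1$ single variables, for a total of $(j-i)+(n-j+i-1)=n-1$ minimal generators. Their variable supports are pairwise disjoint: the chord and outer regions use disjoint cycle vertices, while within the chord consecutive monomials share a cycle vertex but use opposite halves ($x$ versus $y$).

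The main obstacle is proving this absorption claim on the outer arc: for each outer-arc vertex one must identify a concrete source---either the wraparound or a specific earlier $v_\ell$---whose colon contribution is exactly the required single variable, and simultaneously verify that the remaining $v_\ell$'s contribute only monomials divisible by one of these single variables. The matching is driven by the ordering of the $v_\ell$'s by degree and then by lex, which guarantees that shorter admissible paths (hence smaller monomials) are processed before longer ones and are therefore available to perform the absorption; once the matching is in place the count $n-1$ and the pairwise coprimality fall out from the explicit case analysis above.
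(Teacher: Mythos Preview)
Your proposal is correct and follows essentially the same route as the paper: both arguments compute the colon ideals explicitly, arriving at the generating set
\[
I_{k-1}:(v_k)=(x_1,\ldots,x_{i-1},\,y_{i+1},\,x_{i+1}y_{i+2},\ldots,x_{j-2}y_{j-1},\,x_{j-1},\,y_{j+1},\ldots,y_n),
\]
and then observe that these $n-1$ monomials have pairwise disjoint supports. The paper organizes the computation via the split $I_{k-1}:(v_k)=I:(v_k)+(v_1,\ldots,v_{k-1}):(v_k)$ and asserts in one line that the second summand equals $(x_1,\ldots,x_{i-1},y_{j+1},\ldots,y_n)$; your more granular source-matching (including the wraparound in the boundary cases $i=1$ or $j=n$) is in fact a bit more careful here, since in those cases one of the needed single variables really does come from $x_1y_n$ rather than from any $v_\ell$. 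One small slip: the earlier $v_\ell$'s are not all ``strictly shorter'' but only of degree $\leq \deg v_k$; this is harmless, since the particular $v_{(a,j)}$ and $v_{(i,b)}$ you need do have strictly smaller degree.
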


\begin{proof}
The statement is obvious for $J:(x_1y_n)$ since $J$ is minimally generated by a regular sequence. Now let $k\geq 1$ and let 
$v_k=x_ix_{j+1}\cdots x_ny_1\cdots y_{i-1}y_j$ for some $i,j$ with $2\leq j-i\leq n-2.$ 
Then $I_{k-1}:(v_k)=I:(v_k)+(v_1,\ldots,v_{k-1}):(v_k)$. One easily observes that $(v_1,\ldots,v_{k-1}):(v_k)=(x_{1},\ldots,x_{i-1},y_{j+1},\ldots,y_n).$ Hence, 
\[
I:(v_k)=(x_1,\ldots,x_{i-1},x_iy_{i+1},x_{i+1}y_{i+2},\ldots,x_{j-2}y_{j-1},x_{j-1}y_j,
y_{j+1},\ldots,y_n):(v_k)=
\]
\[
(x_1,\ldots,x_{i-1},y_{i+1},x_{i+1}y_{i+2},\ldots,x_{j-2}y_{j-1},x_{j-1},
y_{j+1},\ldots,y_n)
\]
\end{proof}

\begin{Remark}\label{degree}{\em 
From the above proof we also note that if $v_k=x_ix_{j+1}\cdots x_ny_1\cdots y_{i-1}y_j,$ then the regular sequence of monomials which generates $I_{k-1}:(v_k)$ contains $j-i-2$ monomials of degree $2$ and $n-j+i+1$ variables.}
\end{Remark}

In the following lemma we compute the projective dimension and the regularity of $S/I.$ This will be useful for the inductive study of the invariants of  $S/I_k.$

\begin{Lemma}\label{I0}
We have $\projdim S/I=n-1$ and $\reg S/I=n-2.$
\end{Lemma}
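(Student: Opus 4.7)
My plan is to exploit the short exact sequence
\[
0 \longrightarrow (S/K)(-2) \xrightarrow{\,\cdot\, x_1y_n\,} S/J \longrightarrow S/I \longrightarrow 0,
\]
where $K = J:(x_1y_n)$, together with the long exact sequence of $\Tor(-,k)$. By Lemma~\ref{lem1} and Remark~\ref{degree}, $K$ is generated by the length-$(n-1)$ regular sequence $y_2,\,x_{n-1},\,x_2y_3,\ldots,x_{n-2}y_{n-1}$ consisting of two linear and $n-3$ quadratic forms. Hence both $S/J$ and $S/K$ are complete intersections, and their Koszul resolutions give $\projdim S/J = \reg S/J = n-1$, $\projdim S/K = n-1$, $\reg S/K = n-3$, so $\projdim(S/K)(-2) = n-1$ and $\reg(S/K)(-2) = n-1$. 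A direct inspection of the Koszul Betti numbers shows that in bidegree $(n-1,\,2n-2)$ both $\Tor_{n-1}^S((S/K)(-2),k)$ and $\Tor_{n-1}^S(S/J,k)$ are one-dimensional, spanned by the tops of the Koszul complexes, while in every other bidegree $(i,j)$ with $j-i\geq n-1$ the Tor of both ends of the short exact sequence vanishes.

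The main step, which I expect to be the principal obstacle, is to show that the induced map
\[
\Tor_{n-1}^S((S/K)(-2),k)_{2n-2} \longrightarrow \Tor_{n-1}^S(S/J,k)_{2n-2}
\]
is an isomorphism. I would verify this by lifting $\cdot\, x_1y_n$ to a chain map $\Phi$ between the two Koszul complexes and inspecting its top component. The building blocks are the identities
\[
x_1y_n\cdot y_2 = y_n\,(x_1y_2),\qquad x_1y_n\cdot x_{n-1} = x_1\,(x_{n-1}y_n),
\]
together with the observation that the remaining generators $x_iy_{i+1}$ of $K$ (for $2\leq i\leq n-2$) already appear among the generators of $J$. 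Multiplying everything together yields the global identity
\[
(x_1y_n)\cdot\Bigl(y_2\cdot x_{n-1}\cdot\prod_{i=2}^{n-2}x_iy_{i+1}\Bigr)\;=\;\prod_{i=1}^{n-1}x_iy_{i+1},
\]
which forces $\Phi_{n-1}\colon S(-2n+2)\to S(-2n+2)$ to be multiplication by a unit (in fact $\pm 1$), and hence the induced map on $\Tor_{n-1}$ is an isomorphism.

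Granted this, the long exact sequence immediately yields $\Tor_n^S(S/I,k)=0$ and $\Tor_i^S(S/I,k)_j=0$ for every $j-i\geq n-1$, giving the upper bounds $\projdim(S/I)\leq n-1$ and $\reg(S/I)\leq n-2$. For the matching lower bounds, the same long exact sequence in bidegree $(n-1,\,2n-3)$ identifies
\[
\Tor_{n-1}^S(S/I,k)_{2n-3}\ \cong\ \Tor_{n-2}^S((S/K)(-2),k)_{2n-3},
\]
which has dimension $2$ by the Koszul formula (take either $y_2$ or $x_{n-1}$ together with all $n-3$ quadrics). Hence $\beta_{n-1,\,2n-3}(S/I)\neq 0$, and we conclude $\projdim(S/I)=n-1$ and $\reg(S/I)=n-2$.
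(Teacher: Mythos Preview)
Your proof is correct and follows essentially the same route as the paper: the short exact sequence
\[
0 \to (S/K)(-2) \xrightarrow{\,\cdot\, x_1y_n\,} S/J \to S/I \to 0
\]
with $K=J:(x_1y_n)$, followed by a comparison of the Koszul Betti tables of the two complete intersections $S/J$ and $S/K$ in the long exact $\Tor$ sequence, is exactly what the paper does.

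The one substantive difference is that you explicitly verify that the induced map
\[
\Tor_{n-1}^S((S/K)(-2),k)_{2n-2}\ \longrightarrow\ \Tor_{n-1}^S(S/J,k)_{2n-2}
\]
is an isomorphism, by lifting $\cdot\,x_1y_n$ to a chain map between the two Koszul complexes and checking its top component via the product identity. The paper simply asserts $\Tor_n(S/I,k)=0$ from the long exact sequence after observing that both ends are one-dimensional in degree $2n-2$; strictly speaking this requires exactly the injectivity you prove, so your argument is more complete at this point. (A minor remark: your sentence ``in every other bidegree $(i,j)$ with $j-i\geq n-1$ the Tor of both ends vanishes'' should be read with the appropriate index shift for the connecting map --- what is actually needed, and what holds, is $\Tor_i(S/J,k)_j=0$ and $\Tor_{i-1}((S/K)(-2),k)_j=0$ for such $(i,j)\neq(n-1,2n-2)$, which indeed follows from $\reg S/K=n-3$.)
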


\begin{proof}
In the exact sequence 
\begin{equation}\label{seq1}
0\to \frac{S}{J:(x_1y_n)}(-2)\stackrel{x_1y_n}{\To}\frac{S}{J}\To \frac{S}{I}\to 0,
\end{equation}
we have $\projdim S/J:(x_1y_n) =\projdim S/J=n-1$ since both ideals are generated by regular sequences of length $n-1.$ Moreover, since $J$ is generated by a regular sequence of monomials of degree $2,$ by using the Koszul complex, we get $$\beta_{ij}(S/J)=\left\{
\begin{array}{ll}
	0, & j=2i,\\
	{n-1\choose i}, & j\neq 2i.
\end{array}
\right.$$ In particular, it follows that $\Tor_{n-1}(S/J,K)\cong K(-2n+2)$. 

Analogously, since $J:(x_1y_n)$ is generated by a regular sequence of $n-3$ monomials of degree $2$ and two variables, it 
follows that $\Tor_{n-1}(S/J:(x_1y_n),K)\cong K(-2n+4),$ which implies that $\Tor_{n-1}(S/J:(x_1y_n)(-2),K)\cong K(-2n+2).$ By the long exact sequence of Tor's derived from sequence~(\ref{seq1}), we get $\Tor_n(S/I,K)=0, $ hence $\projdim S/I\leq n-1.$

On the other hand, we have $$\beta_{n-2,j}(S/J:(x_1y_n))=\left\{
\begin{array}{ll}
	2, & j=2n-5,\\
	n-3, & j=2n-6,\\
	0,& \text{ otherwise}.
\end{array}
\right.$$ From the exact sequence of Tor's applied to (\ref{seq1}), as 
\[
\Tor_{n-1}(S/J,K)_{2n-3}=\Tor_{n-2}(S/J,K)_{2n-3}=0,
\]
we get the following exact sequence
\[
0\to \Tor_{n-1}(S/I,K)_{2n-3}\to \Tor_{n-2}(S/J:(x_1y_n),K)_{2n-5}\to 0.
\] Thus, $\Tor_{n-1}(S/I,K)\neq 0$ which implies that $\projdim S/I=n-1.$

For the regularity, we first observe that since the Koszul complex of the minimal generators gives the minimal graded free 
resolution of $S/J$ and, respectively, $S/J:(x_1y_n)$, we have $\reg S/J=\reg(S/J:(x_1y_n)(-2))=n-1.$ Then sequence~(\ref{seq1}) implies that $\reg S/I\leq n-1.$  We have observed above that $\Tor_{n-1}(S/I,K)_{2n-3}\neq 0,$ thus $\reg S/I\geq n-2.$ In order to derive the equality $\reg S/I= n-2$ we have to show that $\beta_{i,i+n-1}(S/I)=0$ for all $i.$

For $i=n-1$, as $\Tor_{n-2}(S/J:(x_1y_n),K)_{2n-4}=0,$ we get
\[
0\to \Tor_{n-1}(S/J:(x_1y_n),K)_{2n-4}\to \Tor_{n-1}(S/J,K)_{2n-2}\to \Tor_{n-1}(S/I,K)_{2n-2}\to 0.
\] But $\dim_K \Tor_{n-1}(S/J:(x_1y_n),K)_{2n-4}=\dim_K \Tor_{n-1}(S/J,K)_{2n-2}=1,$ which implies that  $\Tor_{n-1}(S/I,K)_{2n-2}=0.$ 
For $i<n-1,$  $\Tor_i(S/I,K)_{i+n-1}=0$ since $\Tor_i(S/J,K)_{i+n-1}=0$ and $\Tor_{i-1}(S/J:(x_1y_n),K)_{i+n-3}=0$. The latter equality holds since, as we have already observed,  $\reg S/J:(x_1y_n)=n-3.$
\end{proof}

\begin{Lemma}\label{induction}
For $1\leq k\leq m,$ we have $\projdim S/I_k\leq n$ and $\reg S/I_k\leq n-2.$
\end{Lemma}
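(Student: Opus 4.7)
The plan is to prove both bounds simultaneously by induction on $k$, with base case $k=0$ (meaning $I_0 = I$) handled by Lemma~\ref{I0}: indeed $\projdim S/I = n-1 \le n$ and $\reg S/I = n-2$.

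For the inductive step, for each $k \geq 1$ I would use the short exact sequence
\[
0 \to \frac{S}{I_{k-1}:(v_k)}(-\deg v_k) \xrightarrow{\;v_k\;} \frac{S}{I_{k-1}} \to \frac{S}{I_k} \to 0,
\]
together with the standard consequences of the long exact Tor sequence:
\[
\projdim \frac{S}{I_k} \le \max\Bigl\{\projdim \frac{S}{I_{k-1}},\; \projdim \frac{S}{I_{k-1}:(v_k)} + 1\Bigr\},
\]
\[
\reg \frac{S}{I_k} \le \max\Bigl\{\reg \frac{S}{I_{k-1}},\; \reg \frac{S}{I_{k-1}:(v_k)}(-\deg v_k) - 1\Bigr\}.
\]
The task then reduces to bounding $\projdim$ and $\reg$ of the colon ideal quotient.

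By Lemma~\ref{lem1}, $I_{k-1}:(v_k)$ is generated by a regular sequence of monomials of length $n-1$, so its Koszul complex is its minimal free resolution and $\projdim S/(I_{k-1}:(v_k)) = n-1$. By Remark~\ref{degree}, writing $v_k = x_i x_{j+1}\cdots x_n y_1\cdots y_{i-1}y_j$, the regular sequence consists of $n-j+i+1$ variables and $j-i-2$ monomials of degree $2$, so from the Koszul resolution the regularity is $\reg S/(I_{k-1}:(v_k)) = j-i-2$. Since $\deg v_k = n-j+i+1$, the shifted module has regularity
\[
\reg \frac{S}{I_{k-1}:(v_k)}(-\deg v_k) = (j-i-2) + (n-j+i+1) = n-1.
\]

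Plugging these into the inequalities and using the inductive hypotheses $\projdim S/I_{k-1} \le n$ and $\reg S/I_{k-1} \le n-2$ gives
\[
\projdim \frac{S}{I_k} \le \max\{n,\; (n-1)+1\} = n, \qquad \reg \frac{S}{I_k} \le \max\{n-2,\; (n-1)-1\} = n-2,
\]
completing the induction. The main potential pitfall is purely bookkeeping: one has to verify that $\deg v_k + \reg S/(I_{k-1}:(v_k))$ lands exactly on $n-1$ so that the regularity bound closes up, but Remark~\ref{degree} is precisely engineered to make this happen, so once Lemma~\ref{lem1} and Remark~\ref{degree} are in place there is no genuine obstacle.
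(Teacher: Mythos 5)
Your proof is correct and takes essentially the same route as the paper's: induction on $k$ via the short exact sequence $0 \to (S/(I_{k-1}:(v_k)))(-\deg v_k) \to S/I_{k-1} \to S/I_k \to 0$, with $\projdim$ and $\reg$ of the colon quotient read off from Lemma~\ref{lem1} and Remark~\ref{degree}, and the same arithmetic $(j-i-2)+(n-j+i+1)=n-1$ closing the regularity bound.
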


\begin{proof}
We proceed by induction on $k,$ by using the following exact sequence and  Lemma~\ref{I0} for the initial step,
\begin{equation}\label{seq2}
0\to \frac{S}{I_{k-1}:(v_k)}(-\deg v_k)\to \frac{S}{I_{k-1}}\to \frac{S}{I_k} \to 0.
\end{equation}
Indeed, as $I_{k-1}:(v_k)$ is generated by a regular sequence of length $n-1$, it follows that  $\projdim S/I_{k-1}:(v_k)=n-1
.$ Thus, 
if $\projdim S/I_{k-1}\leq n,$ by (\ref{seq2}), it follows that $\projdim S/I_k\leq n.$

In addition, we have $\reg (S/I_{k-1}:(v_k))(-\deg v_k)=\reg (S/I_{k-1}:(v_k))+\deg v_k.$ If $v_k=x_ix_{j+1}\cdots x_ny_1
\cdots y_{i-1}y_j,$ by using Remark~\ref{degree}, we obtain $\reg (S/I_{k-1}:(v_k))=j-i-2.$ As $\deg v_k=n-j+i+1,$ we get
$\reg (S/I_{k-1}:(v_k))(-\deg v_k)=n-1.$ Let us assume, by induction, that $\reg S/I_{k-1}\leq n-2.$ Then, by using the  
sequence~(\ref{seq2}), we obtain 
\[
\reg S/I_k\leq \max\{\reg(S/I_{k-1}:(v_k))(-\deg v_k)-1,\reg S/I_{k-1} \}=n-2.
\]
\end{proof}

\begin{Proposition}\label{pd}
We have $\projdim S/\ini_<(J_G)=n$ and $\reg S/\ini_<(J_G)=n-2.$
\end{Proposition}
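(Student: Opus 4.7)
The plan is to combine the upper bounds already established in Lemma~\ref{induction} with matching lower bounds coming from comparison between $J_G$ and its initial ideal. Since $I_m = \ini_<(J_G)$, Lemma~\ref{induction} applied at $k=m$ gives $\projdim S/\ini_<(J_G)\leq n$ and $\reg S/\ini_<(J_G)\leq n-2$, so only the reverse inequalities remain.

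For the lower bounds, I would appeal to the standard fact that, for any monomial order, passing to the initial ideal can only increase graded Betti numbers, so in particular $\projdim S/J_G\leq \projdim S/\ini_<(J_G)$ and $\reg S/J_G \leq \reg S/\ini_<(J_G)$. The Betti diagram of $S/J_G$ recalled in Section~1.2 shows $\projdim S/J_G=n$ and $\reg S/J_G=n-2$ (the unique nonzero extremal entry being $\beta_{n,2n-2}=\binom{n-1}{2}-1$, which sits in homological degree $n$ and row $n-2$). Combining these two observations with the upper bounds yields the equalities claimed.

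The only thing to check is that the case $n=3$ is handled separately, but the author already observed at the start of Section~2.2 that for $n=3$ the graph is complete and $J_G$ equals its initial ideal, so the proposition is trivial in that case; hence the reduction to $n\geq 4$ (which is the range covered by Lemmas~\ref{I0} and~\ref{induction}) suffices.

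There is no serious obstacle here: the hard work has been done in Lemma~\ref{I0} (setting up the base of the induction by computing $\projdim$ and $\reg$ of $S/I$ via the Koszul complex of $J$ and of $J:(x_1y_n)$) and in Lemma~\ref{induction} (propagating the bounds through the short exact sequences~(\ref{seq2})). The present proposition is essentially the harvest of those estimates, with the upper bounds from the induction meeting the lower bounds forced by $J_G\subset \ini_<(J_G)$. The proof is therefore expected to be just a few lines.
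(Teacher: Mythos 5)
Your proposal matches the paper's proof: the upper bounds come from Lemma~\ref{induction} at $k=m$, and the lower bounds come from the standard comparison $\projdim S/J_G\leq \projdim S/\ini_<(J_G)$, $\reg S/J_G\leq \reg S/\ini_<(J_G)$ (the paper cites \cite[Theorem 3.3.4]{HH10} for exactly this), together with the known values $\projdim S/J_G=n$ and $\reg S/J_G=n-2$ from the Betti diagram of the cycle. This is essentially the same argument, so nothing further is needed.
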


\begin{proof}
As $\ini_<(J_G)=I_m,$ by Lemma~\ref{induction}, we get $\projdim S/\ini_<(J_G)\leq n$ and $\reg S/\ini_<(J_G)\leq n-2.$ For the other inequalities, we use \cite[Theorem 3.3.4]{HH10}  which gives the inequalities $n=\projdim S/J_G\leq \projdim S/\ini_<(J_G)$ and $n-2=\reg S/J_G\leq \reg S/\ini_<(J_G).$ 
\end{proof}

\begin{Theorem}\label{final}
Let $G$ be a cycle. Then $S/\ini_<(J_G)$ and $S/J_G$ have the same extremal Betti number, namely 
$\beta_{n,2n-2}(S/J_G)=\beta_{n,2n-2}(S/\ini_<(J_G))={n-1\choose 2}-1.$
\end{Theorem}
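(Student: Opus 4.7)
The plan is to compute $\beta_{n,2n-2}(S/\ini_<(J_G))$ by iterating the short exact sequence (\ref{seq2}) through the filtration $I = I_0 \subset I_1 \subset \cdots \subset I_m = \ini_<(J_G)$, and then to match the answer against the known value $\beta_{n,2n-2}(S/J_G) = \binom{n-1}{2} - 1$ from \cite{ZZ}. Because $\ini_<(J_G)$ does not have linear quotients in the cycle case, a direct formula in the spirit of Theorem~\ref{bnbers} is unavailable; instead the strategy is to show that each step of the filtration contributes exactly $1$ to $\beta_{n,2n-2}$.

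For the base case, $\beta_{n,2n-2}(S/I_0) = 0$ because $\projdim S/I = n-1 < n$ by Lemma~\ref{I0}. For the inductive step I would apply the long exact sequence of $\Tor(-,K)$ to (\ref{seq2}). Two vanishings collapse it to a short exact sequence in homological degree $n$: first, $\Tor_n(S/(I_{k-1}:(v_k)),K) = 0$ since $\projdim S/(I_{k-1}:(v_k)) = n-1$ by Lemma~\ref{lem1}; second, $\Tor_{n-1}(S/I_{k-1},K)_{2n-2} = 0$, because a nonzero entry there would force $\reg S/I_{k-1} \geq n-1$, contradicting Lemma~\ref{induction}. These yield
\[
0 \to \Tor_n(S/I_{k-1},K)_{2n-2} \to \Tor_n(S/I_k,K)_{2n-2} \to \Tor_{n-1}(S/(I_{k-1}:(v_k)),K)_{2n-2-\deg v_k} \to 0.
\]

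To evaluate the rightmost term I would invoke the Koszul complex: by Lemma~\ref{lem1} and Remark~\ref{degree}, for $v_k = x_i x_{j+1}\cdots x_n y_1 \cdots y_{i-1} y_j$, the ideal $I_{k-1}:(v_k)$ is generated by a regular sequence with $j-i-2$ monomials of degree $2$ and $n-j+i+1$ variables. The Koszul complex resolves $S/(I_{k-1}:(v_k))$ minimally, so its top Betti number is $1$, concentrated in the single degree equal to the sum of generator degrees, $2(j-i-2)+(n-j+i+1) = n+j-i-3$. Since $\deg v_k = n-j+i+1$, a direct check gives $2n-2-\deg v_k = n+j-i-3$, so $\beta_{n-1,2n-2-\deg v_k}(S/(I_{k-1}:(v_k))) = 1$. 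Hence $\beta_{n,2n-2}(S/I_k) = \beta_{n,2n-2}(S/I_{k-1}) + 1$ for every $k$, and telescoping yields $\beta_{n,2n-2}(S/\ini_<(J_G)) = m = n(n-3)/2 = \binom{n-1}{2}-1$. That this is the unique extremal Betti number of $S/\ini_<(J_G)$ follows from Proposition~\ref{pd}, since $\projdim S/\ini_<(J_G) = n$ and $\reg S/\ini_<(J_G) = n-2$ force the sole corner of its Betti diagram to sit at position $(n,2n-2)$.

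The main obstacle is getting the two vanishings right — in particular, checking $\Tor_{n-1}(S/I_{k-1},K)_{2n-2} = 0$ uniformly in $k$ from the regularity bound of Lemma~\ref{induction} — and verifying the numerical coincidence $2n-2-\deg v_k = n+j-i-3$ that makes the Koszul top contribution land precisely in the extremal slot rather than a neighboring one. Once these are in place, the recursion telescopes to the required answer.
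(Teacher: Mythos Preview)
Your proposal is correct and follows essentially the same argument as the paper's own proof: both use the filtration $I_0\subset\cdots\subset I_m$, the long exact $\Tor$ sequence from~(\ref{seq2}), the vanishing $\Tor_{n-1}(S/I_{k-1},K)_{2n-2}=0$, and the Koszul computation from Remark~\ref{degree} to show each step increments $\beta_{n,2n-2}$ by one. If anything, you are slightly more explicit than the paper in justifying the two vanishings that collapse the long exact sequence and in checking the degree coincidence $2n-2-\deg v_k = n+j-i-3$.
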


\begin{proof}
We only need to show that $\beta_{n,2n-2}(S/\ini_<(J_G))=m$ since $m=(n^2-3n)/2={n-1\choose 2}-1$.

We use again the sequence~(\ref{seq2}). By considering its long exact sequence of Tor's and using the equality 
$\Tor_{n-1}(S/I_{k-1},K)_{2n-2}=0,$ we get
\[
0\to\Tor_n(S/I_{k-1},K)_{2n-2}\to \Tor_n(S/I_{k},K)_{2n-2}\to \Tor_{n-1}(S/I_{k-1}:(v_k),K)_{2n-2-\deg v_k}\to 0,
\]
for $1\leq k\leq m.$ By Remark~\ref{degree}, if $v_k=x_ix_{j+1}\cdots x_ny_1\cdots y_{i-1}y_j,$ then $I_{k-1}:(v_k)$ is generated by a regular sequence of monomials which contains $j-i-2$ elements of degree $2$ and $n-j+i+1$ variables. 
This implies that $\dim_K\Tor_{n-1}(S/I_{k-1}:(v_k),K)_{2n-2-\deg v_k}=1.$ Therefore, we get
\[
\dim_K\Tor_n(S/I_{k},K)_{2n-2}=\dim_K \Tor_n(S/I_{k-1},K)_{2n-2}+1
\] for $1\leq k\leq m.$ By summing up all these equalities, it follows that 
\[
\beta_{n,2n-2}(S/\ini_<(J_G))=\dim_K\Tor_n(S/I_{m},K)_{2n-2}=\dim_K\Tor_n(S/I,K)_{2n-2}+m=m.
\] The last equality is due to Lemma~\ref{I0}.
\end{proof}

\begin{Remark}{\em  There are examples of graphs whose edge ideal have several extremal Betti numbers. For instance, the graph $G$ displayed below has two extremal Betti numbers which are equal to the extremal Betti numbers of $\ini_<(J_G).$

\begin{figure}[hbt]
\begin{center}
\psset{unit=1.2cm}
\begin{pspicture}(1,-2)(7,2)
\psline(1,1)(7,1)
\psline(2,1)(4,0)
\psline(3,1)(4,0)
\psline(5,1)(4,0)
\psline(6,1)(4,0)
\psline(4,-1)(4,0)

\rput(1,1){$\bullet$}
\rput(2,1){$\bullet$}
\rput(3,1){$\bullet$}
\rput(4,1){$\bullet$}
\rput(5,1){$\bullet$}
\rput(6,1){$\bullet$}
\rput(7,1){$\bullet$}
\rput(4,0){$\bullet$}
\rput(4,-1){$\bullet$}

\end{pspicture}
\end{center}
\label{example}
\end{figure}
}
\end{Remark}

{}

\end{document}